\title{A Tensor-Cube Version of the Saxl Conjecture}
\author{Nate Harman}
\author{Christopher Ryba}
\address[Nate Harman]{Department of Mathematics, University of Michigan, Ann Arbor, MI, 48101, USA}
\email{nharman@umich.edu}
\address[Christopher Ryba]{Department of Mathematics, University of California, Berkeley, CA 94720, USA}
\email{ryba@math.berkeley.edu}
\definecolor{col1}{HTML}{d7191c}
\definecolor{col2}{HTML}{fdae61}
\definecolor{col3}{HTML}{4bbd84}
\definecolor{col4}{HTML}{2b43ba}
\begin{document}
\maketitle
\begin{abstract}
Let $n$ be a positive integer, and let $\rho_n = (n, n-1, n-2, \ldots, 1)$ be the ``staircase'' partition of size $N = {n+1 \choose 2}$. The Saxl conjecture asserts that every irreducible representation $S^\lambda$ of the symmetric group $S_N$ appears as a subrepresentation of the tensor square $S^{\rho_n} \otimes S^{\rho_n}$. In this short note we give two proofs that every irreducible representation of $S_N$ appears in the tensor cube $S^{\rho_n} \otimes S^{\rho_n} \otimes S^{\rho_n}$.
\end{abstract}
\newtheorem{theorem}{Theorem}
\newtheorem{lemma}[theorem]{Lemma}
\newtheorem{proposition}[theorem]{Proposition}
\newtheorem{corollary}[theorem]{Corollary}
\newtheorem{definition}[theorem]{Definition}
\newtheorem{example}[theorem]{Example}
\newtheorem{remark}[theorem]{Remark}
\newtheorem{observation}[theorem]{Observation}

\section{Introduction}
\noindent
Let $n, \rho_n, N$ be as defined in the abstract. In 2012, Jan Saxl conjectured that $S^{\rho_n} \otimes S^{\rho_n}$ contains every irreducible representation of $S_N$ as a subrepresentation. This was motivated by a similar phenomenon for finite simple groups of Lie type \cite{HSTZ}. Despite a great amount of effort, tensor products of symmetric group representations are poorly understood from a combinatorial point of view. The Saxl conjecture serves as one benchmark for results in this area. Progress has been made using a wide variety of approaches, for example, using ideas from combinatorics \cite{PPV} \cite{Li}, modular representation theory \cite{BBS} \cite{BB}, or probability \cite{Sellke} \cite{LS}. In this last paper mentioned, Luo and Sellke proved a weaker version of the Saxl conjecture, namely that for $n$ sufficiently large, $\left(S^{\rho_n}\right)^{\otimes 4}$ contains every irreducible representation of $S_N$ as a subrepresentation.  Our main theorem is the following improvement:

\begin{theorem}\label{thm:main}
 $\left(S^{\rho_n}\right)^{\otimes 3}$ contains all irreducible representations of $S_N$.
\end{theorem}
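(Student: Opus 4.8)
The plan is to convert the triple-tensor statement into one about a single tensor product sharing a constituent with the square. Since each $S^{\rho_n}$ is self-dual and Kronecker coefficients are invariant under permuting their three arguments, the multiplicity of $S^\nu$ in $\left(S^{\rho_n}\right)^{\otimes 3}$ is
\[
\langle \chi^\nu,\, (\chi^{\rho_n})^3\rangle \;=\; \dim \Hom_{S_N}\!\left(S^\nu \otimes S^{\rho_n},\; S^{\rho_n}\otimes S^{\rho_n}\right).
\]
Thus Theorem~\ref{thm:main} is equivalent to the assertion that for every partition $\nu$ of $N$, the representations $S^\nu \otimes S^{\rho_n}$ and $S^{\rho_n}\otimes S^{\rho_n}$ have a common irreducible constituent. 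I would then pursue two independent routes, matching the two promised proofs.

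The first (character-theoretic) route proves the statement for all $\nu$ simultaneously. Writing the multiplicity as $\tfrac{1}{N!}\sum_{\sigma}\chi^\nu(\sigma)\,\chi^{\rho_n}(\sigma)^3$, bounding $|\chi^\nu(\sigma)|\le \dim S^\nu$ and isolating the identity term yields
\[
\dim \Hom_{S_N}\!\left(S^\nu \otimes S^{\rho_n},\,(S^{\rho_n})^{\otimes 2}\right) \;\ge\; \frac{\dim S^\nu}{N!}\left[(\dim S^{\rho_n})^3 - \sum_{\sigma \ne 1}\,|\chi^{\rho_n}(\sigma)|^3\right].
\]
It therefore suffices to establish the single inequality $\sum_{\sigma\ne 1}|\chi^{\rho_n}(\sigma)|^3 < (\dim S^{\rho_n})^3$. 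The leverage is that $\rho_n$ is self-conjugate, so its total content vanishes and $\chi^{\rho_n}$ is zero on transpositions; more strongly, $\rho_n$ is a $2$-core, which via Murnaghan--Nakayama forces extensive vanishing and smallness of $\chi^{\rho_n}$. I would bound the ratios $|\chi^{\rho_n}(\sigma)|/\dim S^{\rho_n}$ by decay estimates in the size of the support of $\sigma$, sum the cubes against the count of elements of each support size, and dispatch the finitely many small $n$ by direct computation.

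The second (combinatorial) route keeps $\nu$ explicit. Here I would fix a family $\mathcal{F}$ of partitions already known to occur in the tensor square from prior work (for instance hooks, two-row and two-column shapes, and shapes comparable to $\rho_n$ in dominance order), and then show that $S^\nu \otimes S^{\rho_n}$ meets $\mathcal{F}$ for every $\nu$. The natural instance is to aim for a constituent $\mu$ of $S^\nu\otimes S^{\rho_n}$ with $\mu \trianglerighteq \rho_n$ or $\mu \trianglelefteq \rho_n$, attacking existence via the extremal (dominance-maximal and -minimal) constituents of a Kronecker product, combined with the symmetry $g(\rho_n,\rho_n,\mu)=g(\rho_n,\rho_n,\mu')$ coming from self-conjugacy, which makes the set of square-constituents invariant under transposition.

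The main obstacle in the first route is the cubic character-sum bound itself: unlike the fourth-power estimate of Luo--Sellke, cubes decay more slowly, so the crude inequality is genuinely tighter and appears to require the full $2$-core structure (not merely the vanishing on transpositions) to close uniformly in $n$. In the second route the crux is guaranteeing a dominance-comparable constituent for every $\nu$, since extremal-constituent bounds control only the top and bottom of the product, and one must rule out the possibility that $S^\nu\otimes S^{\rho_n}$ straddles $\rho_n$ while avoiding the comparable region entirely.
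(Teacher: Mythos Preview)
Your proposal is a plan rather than a proof: in both routes the decisive step is left as an acknowledged obstacle, and neither route matches what the paper actually does.

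In your first route everything rests on the single inequality $\sum_{\sigma\ne 1}|\chi^{\rho_n}(\sigma)|^3 < (\dim S^{\rho_n})^3$. You do not prove it; you only propose to bound character ratios by support size and invoke the $2$-core vanishing. But this cubic-moment bound, uniform in $n$, is exactly the barrier that confined Luo--Sellke to fourth powers (and even then only for large $n$); there is no evidence in your outline that Murnaghan--Nakayama vanishing suffices to close the gap, and the paper makes no attempt at any such analytic estimate.

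Your second route correctly reduces the problem to finding, for every $\nu$, a constituent of $S^\nu\otimes S^{\rho_n}$ that already lies in the Saxl square. However, the mechanism you propose---extremal constituents together with the transposition symmetry $g(\rho_n,\rho_n,\mu)=g(\rho_n,\rho_n,\mu')$---does not exclude the ``straddling'' possibility you yourself flag, so the argument is incomplete at its crux. The paper's first proof resolves precisely this point, but not by analysing individual $\nu$: it exhibits, via the internal-product expansion of $h_{\rho_n}*s_{\rho_n}$, a summand equal to $h_{(1^N)}$, so that $M^{\rho_n}\otimes S^{\rho_n}$ contains the regular representation $M^{(1^N)}$. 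Since every constituent $S^\lambda$ of $M^{\rho_n}$ has $\lambda\trianglerighteq\rho_n$ and hence sits inside $(S^{\rho_n})^{\otimes 2}$ by the Luo--Sellke/Ikenmeyer lemma, this yields every irreducible inside $(S^{\rho_n})^{\otimes 3}$ in one stroke.

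The paper's second proof is unrelated to either of your routes. It uses that $\rho_n$ is a $2$-core, so $S^{\rho_n}$ is simple and projective in characteristic $2$. The dominance lemma again shows that the mod-$2$ reduction of $(S^{\rho_n})^{\otimes 2}$ has every $2$-modular simple $D^\lambda$ as a composition factor; tensoring with the projective $S^{\rho_n}$ then splits off every $D^\lambda\otimes S^{\rho_n}$ as a direct summand, so every projective indecomposable appears, and lifting to characteristic zero gives every ordinary irreducible.
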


 We note that the results of Luo and Sellke extend to $N$ not of the form ${n+1 \choose 2}$, while ours rely on special properties of staircase partitions which do not seem to generalize. We provide two proofs, the first using combinatorics, and the second using modular representation theory.

\section*{Acknowledgements}
\noindent
The first author would like to thank Joshua Mundinger for helpful conversations. The second author would like to thank Mark Sellke for helpful comments on a previous version of this paper.

\section{Proof using Combinatorics}
\noindent
For now, let $N$ be an arbitrary positive integer, which we will specialise to ${n+1 \choose 2}$ at the end of this section. We use the following result of Luo and Sellke (although for our final conclusion the prior result of Ikenmeyer \cite{Ikenmeyer} is sufficient).

\begin{lemma}[Luo-Sellke, Appendix B \cite{LS}] \label{thm:LuoSellke}
Let $\mu, \nu$ be partitions of size $N$ such that $\mu$ has distinct parts, and $\nu$ is greater than or equal to $\mu$ in the dominance order. Then $S^{\mu} \otimes S^{\mu}$ contains $S^{\nu}$ as a subrepresentation.
\end{lemma}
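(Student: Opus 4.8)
The plan is to prove the equivalent statement that the Kronecker coefficient $g(\mu,\mu,\nu) = \langle \chi^\mu \otimes \chi^\mu, \chi^\nu\rangle$ is positive for every $\nu \trianglerighteq \mu$ in the dominance order. Since $S^\mu$ is self-dual, this is the same as asking that $S^\mu$ occur in $S^\nu \otimes S^\mu$. The repackaging I would build the argument around comes from testing against the Young permutation module $M^\mu = \Ind{S_\mu}{S_N} \mathbf{1}$, where $S_\mu = S_{\mu_1} \times \cdots \times S_{\mu_\ell}$. By Frobenius reciprocity and self-duality,
\[
 \Hom_{S_N}\!\left(M^\mu,\, S^\mu \otimes S^\mu\right) \;\cong\; \Hom_{S_\mu}\!\left(\mathbf{1},\, \Res{S_\mu}{S_N} S^\mu \otimes \Res{S_\mu}{S_N} S^\mu\right) \;\cong\; \End_{S_\mu}\!\left(\Res{S_\mu}{S_N} S^\mu\right).
\]
Because $M^\mu \cong \bigoplus_{\nu} K_{\nu\mu}\, S^\nu$ with Kostka numbers satisfying $K_{\nu\mu} \geq 1$ exactly when $\nu \trianglerighteq \mu$, an embedding $M^\mu \hookrightarrow S^\mu \otimes S^\mu$ would give $g(\mu,\mu,\nu) \geq K_{\nu\mu} \geq 1$ for all $\nu \trianglerighteq \mu$ at once, which is more than what we want. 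So the target becomes: produce such an embedding.

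This framing also shows where the hypothesis that $\mu$ has distinct parts must enter. For $\mu = (2,2)$ one computes $S^{(2,2)} \otimes S^{(2,2)} = S^{(4)} \oplus S^{(2,2)} \oplus S^{(1^4)}$, which omits $S^{(3,1)}$ even though $(3,1) \trianglerighteq (2,2)$; equivalently $M^{(2,2)}$ does not embed. Thus no purely formal manipulation can succeed, and strictness of $\mu$ must be used in an essential way. I would exploit it through the characterization that $\mu$ has distinct parts if and only if $\mu = \rho_\ell + \kappa$ for a partition $\kappa$ with at most $\ell$ parts, where $\ell$ is the number of parts of $\mu$; this is the feature that makes $\Res{S_\mu}{S_N} S^\mu$ rigid enough to pin down individual multiplicities.

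Concretely, I would try to realize each $S^\nu$ with $\nu \trianglerighteq \mu$ inside $S^\mu \otimes S^\mu \cong \End(S^\mu)$ (the conjugation action) by an explicit nonzero $S_N$-equivariant map, built from an $S_\mu$-equivariant endomorphism of $S^\mu$ together with the combinatorics of semistandard tableaux of content $\mu$, with strictness of $\mu$ guaranteeing that the resulting vectors do not degenerate. An alternative engine is the semigroup property of the support of Kronecker coefficients (closure under componentwise addition of triples), combined with the decomposition $\mu = \rho_\ell + \kappa$, which would reduce the general strict case toward the staircase case.

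The main obstacle is precisely the separation of the individual coefficients. Any global identity one writes down, for instance $\sum_{\nu} K_{\nu\mu}\, g(\mu,\mu,\nu) = \dim \End_{S_\mu}(\Res{S_\mu}{S_N} S^\mu)$, only certifies that a positively weighted sum of the $g(\mu,\mu,\nu)$ is large, and Möbius inversion against the unitriangular Kostka matrix introduces signs, so it cannot by itself force each term to be nonzero. Overcoming this requires an honest construction valid uniformly for all $\nu \trianglerighteq \mu$, and it is exactly in verifying the nonvanishing of that construction that the distinct-parts hypothesis has to do its work.
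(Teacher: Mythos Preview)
The paper does not prove this lemma; it is quoted as a result of Luo and Sellke (with Ikenmeyer's earlier result as a predecessor) and used as a black-box input to both proofs of Theorem~\ref{thm:main}. So there is no proof in the paper to compare your attempt against.

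Evaluating your proposal on its own terms: it is a strategy outline rather than a proof, and you yourself name the gap. The reduction to an embedding $M^\mu \hookrightarrow S^\mu \otimes S^\mu$ is sound, and the Frobenius-reciprocity computation $\Hom_{S_N}(M^\mu, S^\mu \otimes S^\mu) \cong \End_{S_\mu}(\Res{S_\mu}{S_N} S^\mu)$ is correct. But knowing this $\Hom$-space is nonzero --- which is trivial, since the identity lies in it --- does not produce an embedding; for that one needs $g(\mu,\mu,\nu) \geq K_{\nu\mu}$ for every $\nu$ individually, which is strictly stronger than the target statement. Your final paragraph says exactly this. Neither of your two proposed mechanisms --- an explicit construction via $S_\mu$-equivariant endomorphisms, or the semigroup property of Kronecker positivity combined with the decomposition $\mu = \rho_\ell + \kappa$ --- is actually carried out, and each would require substantial additional argument. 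As written, the proposal halts precisely at the point where the distinct-parts hypothesis must do real work, without doing it.
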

\noindent
If $\mu = (\mu_1, \mu_2, \ldots, \mu_r)$ is a partition of $N$, let $M^{\mu}$ be the permutation representation on cosets of the Young subgroup $S_{\mu} = S_{\mu_1} \times S_{\mu_2} \times \cdots \times S_{\mu_r}$ of $S_N$. Equivalently, $M^{\mu} = \mathrm{Ind}_{S_{\mu}}^{S_N} \left( \mathbf{1} \right)$, where $\mathbf{1}$ is the trivial representation.

\begin{definition}
Given a partition $\mu$, let $C(\mu)$ be the partition obtained as follows. Start off with the empty partition. For each $i \geq 1$, add $i$ parts of size $\mu_i - \mu_{i+1}$ (if $i$ is greater than the length of $\mu$, we define $\mu_i = 0$).
\end{definition}

\begin{example} \label{exmpl:staircase}
Suppose that $\mu = \rho_n$ is the staircase partition of size ${n+1 \choose 2}$. Then $\mu_i - \mu_{i+1} = 1$ for $i = 1,2, \ldots, n$. Thus $C(\mu)$ consists of the part $1$ repeated $1 + 2 + \cdots + n = {n+1 \choose 2}$ times.
\end{example}

\begin{theorem} \label{thm:tensor_summand}
Let $\mu$ be any partition. Then $M^{\mu} \otimes S^{\mu}$ contains $M^{C(\mu)}$ as a subrepresentation.
\end{theorem}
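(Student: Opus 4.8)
The plan is to rewrite the tensor product via the projection formula and then reduce the statement to a single branching property of $S^\mu$, which I would establish by induction on the number of parts of $\mu$. Write $\mu=(\mu_1,\ldots,\mu_r)$ and put $k_i=\mu_i-\mu_{i+1}$, so that $C(\mu)$ is the partition with exactly $i$ parts equal to $k_i$ for each $i$. First I would apply the tensor (projection) identity
\[
M^\mu\otimes S^\mu=\Ind{S_\mu}{S_N}(\mathbf{1})\otimes S^\mu\cong \Ind{S_\mu}{S_N}\!\left(\Res{S_\mu}{S_N}S^\mu\right).
\]
Since $M^{C(\mu)}=\Ind{S_{C(\mu)}}{S_N}(\mathbf{1})$ and since induction is additive, hence preserves direct summands, it suffices to realize $S_{C(\mu)}$ as a Young subgroup of $S_\mu$ and to prove the $S_\mu$-module containment $\Ind{S_{C(\mu)}}{S_\mu}(\mathbf{1})\subseteq \Res{S_\mu}{S_N}S^\mu$; inducing this up to $S_N$ then yields the theorem.

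For the subgroup, inside the $j$-th factor $S_{\mu_j}$ of $S_\mu=S_{\mu_1}\times\cdots\times S_{\mu_r}$ I would take the Young subgroup with block sizes $\beta^{(j)}:=(k_j,k_{j+1},\ldots,k_r)$, which is legitimate because $k_j+\cdots+k_r=\mu_j$. A block of size $k_i$ then occurs in factor $j$ exactly when $j\le i$, hence $i$ times overall, so the resulting subgroup is precisely $S_{C(\mu)}$ and $\Ind{S_{C(\mu)}}{S_\mu}(\mathbf{1})\cong M^{\beta^{(1)}}\boxtimes\cdots\boxtimes M^{\beta^{(r)}}$, where $M^{\beta^{(j)}}$ is the corresponding permutation module for $S_{\mu_j}$. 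The goal thus becomes the containment
\[
M^{\beta^{(1)}}\boxtimes\cdots\boxtimes M^{\beta^{(r)}}\subseteq \Res{S_\mu}{S_N}S^\mu .
\]

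I would prove this by induction on $r$ (the case $r=1$ being trivial), restricting in two stages through $S_{\mu_1}\times S_{N-\mu_1}$. The heart of the argument is that the $S^{\mu^\flat}$-isotypic part, in the second tensor factor, of $\Res{S_{\mu_1}\times S_{N-\mu_1}}{S_N}S^\mu$ is exactly $M^{\beta^{(1)}}\boxtimes S^{\mu^\flat}$, where $\mu^\flat=(\mu_2,\ldots,\mu_r)$. Indeed, by the branching rule the multiplicity of $S^\alpha\boxtimes S^{\mu^\flat}$ is the Littlewood--Richardson coefficient $c^\mu_{\alpha,\mu^\flat}=\langle s_{\mu/\mu^\flat},s_\alpha\rangle$, and the key computation is that $\mu/\mu^\flat$ is a \emph{horizontal strip}: each column of $\mu$ meets it in at most one cell, while the cells in row $i$ form a single contiguous block of length $k_i$, so its connected components are rows of sizes $k_1,\ldots,k_r$. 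Hence $s_{\mu/\mu^\flat}=h_{k_1}\cdots h_{k_r}=h_{\beta^{(1)}}$, giving $c^\mu_{\alpha,\mu^\flat}=K_{\alpha,\beta^{(1)}}$ and therefore $\bigoplus_\alpha c^\mu_{\alpha,\mu^\flat}S^\alpha=M^{\beta^{(1)}}$ by Young's rule. Restricting the second factor $S^{\mu^\flat}$ further to $S_{\mu_2}\times\cdots\times S_{\mu_r}$ and applying the inductive hypothesis, whose compositions are exactly $\beta^{(2)},\ldots,\beta^{(r)}$, produces $M^{\beta^{(1)}}\boxtimes\cdots\boxtimes M^{\beta^{(r)}}$ inside $\Res{S_\mu}{S_N}S^\mu$, closing the induction.

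I expect the main obstacle to be isolating the correct intermediate module $M^{\beta^{(1)}}\boxtimes S^{\mu^\flat}$ and verifying the branching identity $s_{\mu/\mu^\flat}=h_{\beta^{(1)}}$: everything depends on the fortunate fact that deleting the first row of $\mu$ leaves a horizontal strip, which is precisely what makes the relevant Littlewood--Richardson coefficients collapse to Kostka numbers and allows the induction to proceed one row at a time. Once this identity is established, the remaining ingredients (the projection formula, the bookkeeping that identifies the Young subgroup $S_{C(\mu)}\subseteq S_\mu$, and the two-stage restriction) are essentially formal.
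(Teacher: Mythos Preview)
Your argument is correct. The paper's proof works entirely in symmetric functions: it invokes Macdonald's expansion $h_\mu * s_\mu = \sum \prod_i s_{\lambda^{(i)}/\lambda^{(i+1)}}$ over chains $\mu=\lambda^{(1)}\supset\lambda^{(2)}\supset\cdots$ and picks out the single term where $\lambda^{(i)}$ is $\mu$ with its first $i-1$ rows deleted, observing that each skew piece is a disjoint union of rows. Your route unpacks exactly the same chain but on the representation side: the projection formula $M^\mu\otimes S^\mu\cong\Ind{S_\mu}{S_N}(\Res{S_\mu}{S_N}S^\mu)$ is the module-theoretic form of $h_\mu * (-)$, and your inductive restriction through $S_{\mu_1}\times S_{N-\mu_1}$, then $S_{\mu_2}\times S_{N-\mu_1-\mu_2}$, etc., traces out precisely the chain $\lambda^{(i)}=(\mu_i,\ldots,\mu_r)$; the horizontal strip $\mu/\mu^\flat$ you analyse is the paper's $\lambda^{(1)}/\lambda^{(2)}$, and the identity $s_{\mu/\mu^\flat}=h_{\beta^{(1)}}$ is the same row-decomposition observation. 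What your approach buys is self-containment: you never need to quote the closed formula for $h_\mu * s_\mu$, only the projection formula, the Littlewood--Richardson branching rule, and Young's rule. What the paper's approach buys is brevity: once the Macdonald formula is on the table, there is no induction to run, and the required summand is exhibited in one line.
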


\begin{proof}
It is convenient to take Frobenius characteristics to turn representations into symmetric functions. The Frobenius characteristics of $M^{\mu}$ and $S^\mu$ are $h_\mu$ and $s_\mu$ respectively. Their tensor product is described by the internal product of symmetric functions, which admits the following description (see Example 23(c) of Section 1.7 of \cite{Macdonald}):
\[
h_\mu * s_\mu = \sum \prod_{i\geq 1} s_{\lambda^{(i)} / \lambda^{(i+1)}}
\]
where the sum ranges over all nested families of partitions $\lambda^{(i)}$ obeying $\mu = \lambda^{(1)} \supset \lambda^{(2)} \supset \cdots \supset \varnothing$, such that $|\lambda^{(i)}| - |\lambda^{(i+1)}| = |\mu_i|$.
\newline \newline \noindent
Each summand is the Frobenius characteristic of a genuine (rather than virtual) representation, so to show that $M^{C(\mu)}$ is a subrepresentation of $M^{\mu} \otimes S^\mu$, it suffices to show that one of the summands is equal to $h_{C(\mu)}$. We do this by considering the term where $\lambda^{(i)}$ is obtained from $\mu$ by removing the first $(i-1)$ rows (so $\lambda_j^{(i)} = \mu_{j+i-1}$). Then it is not difficult to see that the skew-diagram $\lambda^{(i)} / \lambda^{(i+1)}$ consists of the disjoint union of rows of lengths $\mu_j - \mu_{j+1}, \mu_{j+1} - \mu_{j+2}, \ldots$. We illustrate this with an example.
\newline \newline \noindent
Suppose that $\mu = (10,6,4,1)$. Then we have $\lambda^{(1)} = (10,6,4,1)$, $\lambda^{(2)} = (6,4,1)$, $\lambda^{(3)} = (4,1)$, $\lambda^{(4)} = (1)$. In the following diagram, $\lambda^{(1)} / \lambda^{(2)}$ is depicted in {\color{col1}red}, $\lambda^{(2)} / \lambda^{(3)}$ is depicted in {\color{col2}orange}, $\lambda^{(3)} / \lambda^{(4)}$ is depicted in {\color{col3}green}, and $\lambda^{(4)}$ is depicted in {\color{col4}blue}:
\begin{figure}[H]
\centering
\ytableausetup{nosmalltableaux}
\begin{ytableau}
*(col4) &*(col3) &*(col3) &*(col3) &*(col2) &*(col2) &*(col1) &*(col1) &*(col1) &*(col1) \\
*(col3) & *(col2) & *(col2) &*(col2) &*(col1) &*(col1) \\
*(col2) &*(col1) &*(col1) &*(col1) \\
*(col1)
\end{ytableau}
\end{figure}
\noindent
In particular a horizontal row of size $\mu_1-\mu_2=4$ appears once, a horizontal row of size $\mu_2-\mu_3=2$ appears twice, a horizontal row of size $\mu_3-\mu_4=3$ appears three times, and a horizontal row of size $\mu_4=1$ appears 4 times.
\newline \newline \noindent
Since a skew-Schur function for a skew-diagram $D$ is the product of the skew-Schur functions associated to the connected components of $D$, and the skew Schur function associated to a horizontal row of size $r$ is $s_{(r)} = h_r$, we conclude that $s_{\lambda^{(i)} / \lambda^{(i+1)}} = h_{\mu_i - \mu_{i+1}} \cdot h_{\mu_{i+1} - \mu_{i+2}}\cdot \cdots$. Thus
\[
\prod_i s_{\lambda^{(i)} / \lambda^{(i+1)}} = h_{C(\mu)},
\]
and so we have found the required summand.
\end{proof}

\begin{corollary} \label{cor:general_constituents}
Let $\mu$ be a partition of size $N$ with distinct parts. Then for each $\lambda$ greater than or equal to $C(\mu)$ in the dominance order, $S^\mu \otimes S^\mu \otimes S^\mu$ contains $S^\lambda$ as a summand.
\end{corollary}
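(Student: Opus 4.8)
The plan is to combine Theorem \ref{thm:tensor_summand} with the Luo--Sellke Lemma \ref{thm:LuoSellke}, working throughout at the level of which irreducibles appear rather than their multiplicities (recall that in characteristic zero subrepresentations and summands coincide, so the two formulations agree). First I would apply Theorem \ref{thm:tensor_summand} to conclude that $M^{C(\mu)}$ is a subrepresentation of $M^\mu \otimes S^\mu$. By Young's rule, $M^{C(\mu)} = \bigoplus_\lambda K_{\lambda, C(\mu)} S^\lambda$, where the Kostka number $K_{\lambda, C(\mu)}$ is nonzero precisely when $\lambda$ dominates $C(\mu)$. Hence $M^{C(\mu)}$, and therefore $M^\mu \otimes S^\mu$, contains $S^\lambda$ for every $\lambda \trianglerighteq C(\mu)$.

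The remaining task is to pass from $M^\mu \otimes S^\mu$ to $S^\mu \otimes S^\mu \otimes S^\mu$. The key point is that every irreducible constituent of $M^\mu$ already occurs in $S^\mu \otimes S^\mu$. Indeed, writing $M^\mu = \bigoplus_\nu K_{\nu\mu} S^\nu$, every $\nu$ with $K_{\nu\mu} \neq 0$ satisfies $\nu \trianglerighteq \mu$; since $\mu$ has distinct parts, Lemma \ref{thm:LuoSellke} guarantees that each such $S^\nu$ is a constituent of $S^\mu \otimes S^\mu$. Thus the set of irreducibles occurring in $M^\mu$ is contained in the set of irreducibles occurring in $S^\mu \otimes S^\mu$. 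I would then invoke the elementary observation that tensoring preserves containment of constituent sets: if every irreducible summand of a representation $A$ also occurs in $B$, then every irreducible summand of $A \otimes C$ occurs in $B \otimes C$, because $A \otimes C$ decomposes as a direct sum of the $S^\alpha \otimes C$ over the constituents $S^\alpha$ of $A$. Applying this with $A = M^\mu$, $B = S^\mu \otimes S^\mu$, and $C = S^\mu$ shows that every irreducible occurring in $M^\mu \otimes S^\mu$ also occurs in $(S^\mu \otimes S^\mu) \otimes S^\mu = S^\mu \otimes S^\mu \otimes S^\mu$. Combined with the first paragraph, this yields $S^\lambda \subseteq S^\mu \otimes S^\mu \otimes S^\mu$ for all $\lambda \trianglerighteq C(\mu)$.

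The hard part will be the logical bridge in the last step: one cannot simply substitute $S^\mu \otimes S^\mu$ for $M^\mu$ inside the tensor product, since $M^\mu$ need not be an honest summand of $S^\mu \otimes S^\mu$ (the multiplicities $K_{\nu\mu}$ may exceed the Kronecker multiplicities). What rescues the argument is that the Corollary only asks which $S^\lambda$ appear, not with what multiplicity, and Lemma \ref{thm:LuoSellke} is precisely strong enough to show that the \emph{constituent set} of $M^\mu$ is swallowed by that of $S^\mu \otimes S^\mu$. Everything else is bookkeeping with dominance order and Kostka positivity.
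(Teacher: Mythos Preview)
Your proof is correct and follows the same route as the paper: apply Theorem~\ref{thm:tensor_summand}, use the dominance characterisation of the constituents of $M^\nu$, and then invoke Lemma~\ref{thm:LuoSellke} to pass from the constituent set of $M^\mu$ to that of $S^\mu \otimes S^\mu$ before tensoring with the third copy of $S^\mu$. The paper's one-line proof leaves this last bridge step implicit, whereas you spell it out carefully (and correctly observe that one only needs containment of constituent sets, not of multiplicities).
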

\begin{proof}
This follows from Theorem \ref{thm:tensor_summand} and the well known fact that the irreducible constituents of $M^\nu$ are precisely the $S^\lambda$ with $\lambda$ greater than or equal to $\nu$ in the dominance order on partitions.
\end{proof}

\noindent \emph{First Proof of Theorem \ref{thm:main}:}
By Corollary \ref{cor:general_constituents}, $S^{\rho_n} \otimes S^{\rho_n} \otimes S^{\rho_n}$ contains $S^\lambda$ for every partition $\lambda$ greater than or equal to $C(\rho_n) = (1^N)$ (see Example \ref{exmpl:staircase}). However, every partition of $N$ is greater than or equal to $(1^N)$ in the dominance order. \hfill $\square$

\section{Proof using Modular Representation Theory}
\noindent
We will now give a second proof of the main theorem, this time following the approach laid out in \cite{BBS} to study the Saxl conjecture via the $2$-modular representation theory of symmetric groups. The proof itself will be quite short, but in the interest of self-containment and readability we'll first state the facts from modular representation theory we will be using.
\newline \newline \noindent
First we will recall some general facts about projective objects in modular representation theory of a finite group.  This are all fairly standard results in modular representation theory and can be found in many places, but as a standard reference  we'll refer to \cite{Serre}.

\begin{lemma}[See \cite{Serre} chapters 14 and 15]\label{thm:projectives}
Let $G$ be a finite group, $k =\bar{k}$ be an algebraically closed field of characteristic $p > 0$, and let $Rep_p(G)$ denote the category of finite dimensional representations of $G$ over $k$.
\begin{enumerate}
\item The number of isomorphism classes projective indecomposable objects in $Rep_p(G)$ is equal to the number simple objects $Rep_p(G)$, which is equal to the number of conjugacy classes of elements of $G$ with order prime to $p$.  Moreover, each simple object occurs uniquely as an irreducible quotient of a projective indecomposable object.

\item If $P$ is a projective object of $Rep_p(G)$, and $V$ is any any representation then $P \otimes V$ is again a projective object.  Moreover if $P$ is a fixed projective object, then every indecomposable projective object $P'$ arises as a direct summand in a module of the form $P \otimes X$ where $X$ is simple.

\item If $P$ is a projective object in $Rep_p(G)$, then $P$ lifts to characteristic zero -- more precisely $P$ is isomorphic to $\tilde{P} \otimes_\mathcal{O} \mathcal{O}/\mathfrak{m}\mathcal{O}$ where $\mathcal{O}$ denotes the ring of Witt vectors for $k$, $\mathfrak{m}$ is the unique maximal ideal in $\mathcal{O}$, and $\tilde{P}$ is projective in the category of representations of $G$ over $\mathcal{O}$.

\item Every ordinary irreducible representation of $G$ over $K = \text{frac}(\mathcal{O})$ occurs inside $\tilde{P} \otimes_\mathcal{O} K$ for some projective indecomposable representation $\tilde{P}$ of $G$ over $\mathcal{O}$.

\end{enumerate}
\end{lemma}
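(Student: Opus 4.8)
The plan is to prove the four assertions separately, since each rests on a different standard mechanism, reducing everything to facts about the finite-dimensional algebra $A = kG$ and its integral form $\mathcal{O}G$.

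For part (1), I would first invoke the general theory of finite-dimensional algebras. Decomposing $A = kG$ as a left module over itself into indecomposables gives $A = \bigoplus_i P_i^{\oplus n_i}$, and the radical quotient $A/\mathrm{rad}(A)$ is semisimple; since $k$ is algebraically closed, $A/\mathrm{rad}(A) \cong \prod_i M_{n_i}(k)$. Hence the simple modules are the $S_i = \mathrm{head}(P_i) = P_i/\mathrm{rad}(P_i)$, and $P \mapsto \mathrm{head}(P)$ is a bijection between projective indecomposables and simples, which also yields the stated uniqueness of each simple as an irreducible quotient. The remaining and most substantial point is Brauer's theorem that the number of simple $kG$-modules equals the number of $p$-regular conjugacy classes. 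For this I would compute $\dim_k$ of the quotient $A/[A,A]$ by the span of commutators — which has the conjugacy-class sums as a basis — and then account for the $p$-th power (Frobenius) map, identifying the dimension of the relevant further quotient both with the number of $p$-regular classes and with $\dim_k Z(A/\mathrm{rad}(A))$, the number of simples. I expect this counting argument to be the main obstacle, as it is the one genuinely non-formal ingredient.

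For part (2), the key is the untwisting isomorphism $kG \otimes V \cong kG^{\oplus \dim V}$ given by $h \otimes v \mapsto h \otimes h^{-1}v$, which shows that tensoring a free module with any $V$ remains free; since a projective $P$ is a summand of a free module, $P \otimes V$ is a summand of a free module and hence projective. For the ``moreover'' clause I would use that the multiplicity of a projective indecomposable $P'$ in a projective module $Q$ equals $\dim \Hom_{kG}(Q, \mathrm{head}(P'))$, and then rewrite $\Hom_{kG}(P \otimes X, S') \cong \Hom_{kG}(P, S' \otimes X^*)$ by adjunction. Choosing $X$ to be a suitable simple constituent of $(S')^* \otimes S$, where $S$ is a fixed simple quotient of $P$, arranges that $S$ is a composition factor of $S' \otimes X^*$, forcing this Hom-space to be nonzero and hence $P'$ to appear in $P \otimes X$.

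For parts (3) and (4), I would appeal to the standard lifting machinery over the complete local ring $\mathcal{O}$. A projective $P$ corresponds to an idempotent $e \in kG$; since $\mathcal{O}$ is complete local, idempotents lift along the surjection $\mathcal{O}G \twoheadrightarrow kG$, so $e$ lifts to $\tilde{e} \in \mathcal{O}G$ and $\tilde{P} := \mathcal{O}G\,\tilde{e}$ is projective over $\mathcal{O}G$ with $\tilde{P} \otimes_{\mathcal{O}} k \cong P$, giving (3). Finally, for (4), extending scalars along $\mathcal{O} \hookrightarrow K$ yields $\mathcal{O}G \otimes_{\mathcal{O}} K \cong KG$, which is semisimple in characteristic zero and therefore contains every ordinary irreducible as a summand; decomposing $\mathcal{O}G$ into projective indecomposables $\tilde{P}_i$ and extending each to $K$ then exhibits every ordinary irreducible inside some $\tilde{P}_i \otimes_{\mathcal{O}} K$. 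These last two parts I expect to be routine given the completeness of $\mathcal{O}$ and the semisimplicity of $KG$.
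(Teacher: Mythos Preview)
The paper does not give its own proof of this lemma; it is recorded as a list of standard facts with a pointer to Serre's textbook. Your outline is essentially the textbook argument one finds there: the head map $P\mapsto P/\mathrm{rad}(P)$ and Brauer's $p$-regular class count for (1), the untwisting isomorphism $kG\otimes V\cong kG^{\oplus\dim V}$ via $g\otimes v\mapsto g\otimes g^{-1}v$ for the first half of (2), idempotent lifting along $\mathcal{O}G\twoheadrightarrow kG$ for (3), and semisimplicity of $KG$ for (4). So there is no alternative route in the paper to compare against; your sketch simply reproduces the standard treatment.

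One small imprecision worth flagging in the ``moreover'' clause of (2): the simple $X$ you want is a constituent of $S'\otimes S^{*}$ rather than of $(S')^{*}\otimes S$. The clean existence argument is that $S'\otimes(S')^{*}$ surjects onto the trivial module, so $S'\otimes(S')^{*}\otimes S$ has $S$ as a composition factor; writing $[(S')^{*}\otimes S]=\sum_Y m_Y[Y]$ in the Grothendieck group then forces $[S'\otimes Y:S]>0$ for some $Y$, and taking $X=Y^{*}$ gives $[S'\otimes X^{*}:S]>0$ as required. This slip is harmless for your conclusion, since all you need is that \emph{some} simple $X$ works.
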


\noindent
Next we will recall some basic facts and notation specific to the modular representation theory of symmetric groups. 

\begin{lemma}
[See \cite{James} chapter 6]\label{thm:symmodular}
 \ 
\begin{enumerate}
\item Simple objects in $Rep_p(S_n)$ are indexed by $p$-regular partitions  $\lambda$ (that is, partitions where there are at most $p-1$ parts of a given size) and are denoted by $D^\lambda$.

\item $D^\lambda$ appears with multiplicity one as a composition factor inside any reduction mod $p$ of the corresponding ordinary irreducible representation $S^\lambda$.

\item If $\lambda$ is a $p$-core (meaning all of the hook lengths of $\lambda$ are relatively prime to $p$) then the reduction of $S^\lambda$ modulo $p$ is simple and projective.

\end{enumerate}
\end{lemma}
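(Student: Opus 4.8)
The plan is to develop all three assertions from James's theory of Specht modules over an arbitrary field, so fix an algebraically closed field $k$ of characteristic $p$. Recall that the permutation module $M^\lambda = \mathrm{Ind}_{S_\lambda}^{S_n}(\mathbf{1})$ has a $k$-basis of $\lambda$-tabloids, and that the \emph{Specht module} $S^\lambda \subseteq M^\lambda$ is the submodule spanned by the polytabloids. I would equip $M^\lambda$ with the symmetric $S_n$-invariant bilinear form $\langle -,- \rangle$ for which the tabloids are orthonormal; then $(S^\lambda)^\perp$ is a submodule and one sets $D^\lambda := S^\lambda / (S^\lambda \cap (S^\lambda)^\perp)$. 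The three facts will emerge from (a) the Submodule Theorem, (b) a dominance bound on homomorphisms, and (c) the hook-length formula combined with block theory.

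For part (1), the engine is James's \textbf{Submodule Theorem}: for any $kS_n$-submodule $U \subseteq M^\lambda$, either $S^\lambda \subseteq U$ or $U \subseteq (S^\lambda)^\perp$. I would prove this by the standard argument that applying a suitable Garnir-type element of the group algebra to any nonzero vector of $M^\lambda$ returns a nonzero scalar multiple of a single polytabloid, which then generates all of $S^\lambda$. Granting this, $S^\lambda \cap (S^\lambda)^\perp$ is the unique maximal submodule of $S^\lambda$ whenever $S^\lambda \not\subseteq (S^\lambda)^\perp$, so $D^\lambda$ is either $0$ or simple. The criterion $D^\lambda \neq 0 \iff \lambda$ is $p$-regular is the delicate combinatorial point: one analyzes the Gram matrix of $\langle -,- \rangle$ on $S^\lambda$ in the basis of standard polytabloids and shows that a distinguished diagonal entry is a unit in $k$ precisely when no part-multiplicity of $\lambda$ reaches $p$. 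Exhaustiveness then follows by counting: the number of $p$-regular partitions of $n$ equals the number of conjugacy classes of $p$-regular elements, which by Lemma~\ref{thm:projectives}(1) equals the number of simple $kS_n$-modules, so once the nonzero $D^\lambda$ are known to be pairwise non-isomorphic they must be all of them.

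Pairwise non-isomorphism in (1) together with the multiplicity statement (2) both flow from a single dominance lemma: $\mathrm{Hom}_{kS_n}(S^\lambda, M^\mu) \neq 0$ forces $\lambda \trianglerighteq \mu$, and $\mathrm{Hom}_{kS_n}(S^\lambda, S^\lambda) = k$. From this I would deduce that the decomposition matrix is \emph{lower unitriangular}: writing $d_{\lambda\mu} = [\,\overline{S^\lambda} : D^\mu\,]$ for the composition multiplicities of the reduction of the ordinary Specht module, one shows $d_{\lambda\mu} = 0$ unless $\mu \trianglerighteq \lambda$, with $d_{\lambda\lambda} = 1$. The diagonal value $d_{\lambda\lambda} = 1$ is exactly assertion (2), since $D^\lambda$ is by construction the head of $\overline{S^\lambda}$ and the triangularity prevents it from recurring lower in a composition series; the off-diagonal vanishing simultaneously shows that distinct $p$-regular $\lambda$ yield non-isomorphic heads, completing the distinctness needed in (1).

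For part (3), suppose every hook length of $\lambda$ is prime to $p$. By the hook-length formula $\dim S^\lambda = n!/\prod_{\square} h(\square)$, so the $p$-adic valuation satisfies $v_p(\dim S^\lambda) = v_p(n!) - \sum_\square v_p(h(\square)) = v_p(n!) = v_p(|S_n|)$. Thus the ordinary irreducible $S^\lambda$ has degree divisible by the full $p$-part of $|S_n|$, i.e.\ it lies in a block of \emph{defect zero}; the standard block-theoretic fact that such a block consists of a single module with $\overline{S^\lambda} = D^\lambda = P^\lambda$ simple and projective is exactly assertion (3), in view of Lemma~\ref{thm:projectives}. I expect the main obstacle to be the non-vanishing criterion in part (1): proving $D^\lambda \neq 0$ precisely for $p$-regular $\lambda$ is the one step that genuinely requires the explicit combinatorics of polytabloids and a careful evaluation of the bilinear form, whereas the Submodule Theorem, the dominance lemma, and the defect-zero argument are comparatively formal once the correct statements are in place.
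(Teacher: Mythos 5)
You should first note what you are comparing against: the paper does not prove this lemma at all — it is quoted as standard background with a citation to James — so the benchmark is the standard textbook development, and your sketch does follow that development (Submodule Theorem, Gram-matrix criterion for $p$-regularity, triangularity of the decomposition matrix, defect-zero block theory). Parts (1) and (3) are essentially right, modulo small slips: the element used in the Submodule Theorem is the signed column sum $\kappa_t = \sum_{\sigma \in C_t} \mathrm{sgn}(\sigma)\,\sigma$, not a Garnir element (Garnir elements belong to the standard basis theorem), and $\kappa_t u$ is a scalar multiple of the polytabloid $e_t$ that may perfectly well be zero — the dichotomy comes from asking whether it vanishes for all $u \in U$ and all $t$, with the perpendicularity in the second branch extracted from $\langle \kappa_t u, \{t\}\rangle = \langle u, e_t\rangle$. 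In (3) you should also record that a $p$-core is automatically $p$-regular (a part repeated $p$ times forces a hook length divisible by $p$), so that the simple projective reduction is legitimately labelled $D^\lambda$ via part (2).

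The genuine gap is in your route to (2) and to the pairwise non-isomorphism in (1): both of the Hom-space claims you lean on are false precisely in characteristic $2$, which is the only characteristic the paper actually uses. Over $\mathbb{F}_2$ the sign and trivial representations coincide, so $\mathrm{Hom}_{kS_2}(S^{(1,1)}, M^{(2)}) \neq 0$ even though $(1,1) \not\trianglerighteq (2)$; this is exactly the characteristic-$2$ exception attached to the semistandard basis theorem for $\mathrm{Hom}(S^\lambda, M^\mu)$ in \cite{James}. Likewise $\mathrm{End}_{kS_n}(S^\lambda) = k$ fails at $p = 2$, since Specht modules can then be decomposable (the classical example is $S^{(5,1,1)}$ over $\mathbb{F}_2$). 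There is also a suppressed step even where those claims hold: passing from $\mathrm{Hom}(S^\lambda, M^\mu)$ to composition multiplicities $d_{\lambda\mu}$ is not formal, because $S^\mu$ is not projective — multiplicities are computed by $\mathrm{Hom}(P^\mu, -)$, not $\mathrm{Hom}(S^\mu, -)$. The repair is James's actual argument: one proves directly that if $D^\mu$ is a composition factor of $M^\lambda$ then $\mu \trianglerighteq \lambda$, with multiplicity exactly one when $\mu = \lambda$, using the operators $\kappa_t$ and the bilinear form applied to composition factors; this holds in every characteristic, including $2$, and since $\overline{S^\lambda} \subseteq M^\lambda$ it yields both assertion (2) and the distinctness of the heads $D^\lambda$ needed in (1). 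With that substitution your outline is a faithful reconstruction of the cited source; the counting step via Lemma~\ref{thm:projectives}(1) and the defect-zero argument for (3) are sound as written.
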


\noindent \emph{Second Proof of Theorem \ref{thm:main}:}  
By Lemma \ref{thm:LuoSellke} (or its predecessor in \cite{Ikenmeyer}) the tensor square $S^{\rho_n}\otimes S^{\rho_n}$ contains every $S^\lambda$ with $\lambda \ge \rho_n$ in the dominance order.  In particular this includes all $2$-regular partitions $\lambda$.  Therefore any reduction mod $2$ of $S^{\rho_n}\otimes S^{\rho_n}$ contains every irreducible $2$-modular representation $D^\lambda$ as a composition factor by Lemma \ref{thm:symmodular} part (2).
\newline \newline \noindent
By Lemma \ref{thm:symmodular} part (3), $S^{\rho_n}$ is projective in characteristic 2 since $\rho_n$ is a $2$-core. So by Lemma \ref{thm:projectives} part (2) when we tensor with the third copy of $S^{\rho_n}$ it splits all extensions and we see that $S^{\rho_n}\otimes S^{\rho_n} \otimes S^{\rho_n} $ contains a copy of $D^\lambda \otimes S^{\rho_n}$ as a direct summand for every $2$-regular partition $\lambda$.  
\newline \newline \noindent
By part $(2)$ of Lemma \ref{thm:projectives} this means that every projective indecomposable object in $Rep_2(S_N)$ is a summand of $S^{\rho_n}\otimes S^{\rho_n} \otimes S^{\rho_n} $, which lifting back to characteristic zero implies every ordinary irreducible representation occurs in $S^{\rho_n}\otimes S^{\rho_n} \otimes S^{\rho_n} $ by part (4) of Lemma \ref{thm:projectives}. \hfill $\square$

\medskip

\noindent \textbf{Remark:} A $2$-modular strengthened Saxl's conjecture was made in \cite{BBS} saying that every projective indecomposable object in $Rep_2(S_N)$ should occur as a direct summand of the Saxl square $S^{\rho_n}\otimes S^{\rho_n}$.  We'll note that this proof proves the tensor cube version of this conjecture along the way.

\bibliographystyle{alpha}
\bibliography{ref.bib}

\end{document}